\documentclass[10pt]{amsart}

\usepackage[a4paper,margin=1.95cm]{geometry}
\usepackage[T1]{fontenc}
\usepackage[utf8]{inputenc}
\usepackage{lmodern}
\usepackage{microtype}
\usepackage{amsmath,amssymb,amsthm,mathtools}
\usepackage[hidelinks,hypertexnames=false]{hyperref}
\usepackage[nameinlink, capitalize, noabbrev]{cleveref}

\newtheorem{theorem}{Theorem}[section]
\newtheorem{proposition}[theorem]{Proposition}
\newtheorem{lemma}[theorem]{Lemma}
\newtheorem{corollary}[theorem]{Corollary}
\numberwithin{equation}{section}

\crefname{theorem}{Theorem}{Theorems}
\crefname{proposition}{Proposition}{Propositions}
\crefname{lemma}{Lemma}{Lemmas}
\crefname{corollary}{Corollary}{Corollaries}
\crefname{equation}{Equation}{Equations}

\newcommand{\bw}{\operatorname{bw}}

\hypersetup{
  pdftitle={A bandwidth refinement of the Erd\H{o}s distinct subset sums bound},
  pdfauthor={Simone Costa; Stefano Della Fiore},
  pdfsubject={Distinct subset sums and hypercube bandwidth},
  pdfkeywords={distinct subset sums, hypercube bandwidth, Catalan numbers}
}

\title[A bandwidth refinement of the Erd\H{o}s distinct subset sums bound]{A bandwidth refinement of the Erd\H{o}s distinct subset sums bound}
\author{Simone Costa}
\address[Simone Costa]{Dipartimento di Ingegneria Civile, Architettura, Territorio, Ambiente e di Matematica (DICATAM), Sezione di Matematica, Universit\`a degli Studi di Brescia, Via Branze 43, 25123 Brescia, Italy}
\author{Stefano Della Fiore}
\address[Stefano Della Fiore]{Dipartimento di Ingegneria dell'Informazione, Universit\`a degli Studi di Brescia, Via Branze 38, 25123 Brescia, Italy}
\date{}
\keywords{Distinct subset sums; Erd\H{o}s distinct-sums problem; hypercube; bandwidth; Catalan numbers}
\subjclass[2020]{05D05, 11B75, 05C78}

\begin{document}
\begin{abstract}
Let $f(n)$ be the least possible largest element of an $n$-element set of
positive integers with pairwise distinct subset sums.  Dubroff, Fox and Xu
proved the finite lower bound
\[
 f(n)\ge \binom{n}{\lfloor n/2\rfloor}
\]
by applying a vertex-boundary estimate to the $2^{n-1}$ subsets whose sums lie
below half of the total sum.  We instead order all $2^n$ subsets by increasing
subset-sum value.  We prove that the bandwidth of this numbering is at most the largest
element of the set, so the exact formula for the bandwidth of the hypercube
gives the stronger finite bound
\[
 f(n)\ge H_n:=\sum_{j=0}^{n-1}\binom{j}{\lfloor j/2\rfloor}.
\]
An exact identity for $H_n$ in terms of Catalan numbers yields
\[
 \frac{H_n}{\binom{n}{\lfloor n/2\rfloor}}
 =\begin{cases}
  1+\dfrac{2}{3n}+O(n^{-2}),& n\text{ even},\\[5pt]
  1+\dfrac{4}{3n}+O(n^{-2}),& n\text{ odd}.
 \end{cases}
\]
For every $n\ge3$, this bound is strictly larger than the central-binomial
bound.  As a separate corollary, a first-order asymptotic formula for
factorials rewrites this lower bound as a multiple of
$\sqrt{2/\pi}\,2^n/\sqrt n$; the
coefficients of $1/n$ are $5/12$ in even dimension and $7/12$ in odd
dimension.
\end{abstract}
\maketitle
\enlargethispage{2pt}

\section{Introduction}

Let
\[
 A=\{a_1<\cdots<a_n\}\subset \mathbb Z_{>0}
\]
be a set whose $2^n$ subset sums are pairwise distinct.  For
$S\subseteq[n]:=\{1,\ldots,n\}$, write
\[
 w(S):=\sum_{i\in S}a_i.
\]
We set
\[
 f(n):=\min\{a_n: A\subset\mathbb Z_{>0},\ |A|=n,
              \text{$A$ has pairwise distinct subset sums}\}.
\]
Erd\H{o}s asked whether there is an absolute constant $c>0$ such that
\[
 f(n)\ge c2^n.
\]
The problem remains open.  Since all $2^n$ subset sums are distinct integers in
$[0,\sum_i a_i]$ and $\sum_i a_i\le n a_n$, the elementary interval count gives
\[
 a_n\ge \frac{2^n-1}{n}.
\]
The classical Erd\H{o}s--Moser second-moment argument
improves this to
\[
 f(n)\ge \left(\frac14-o(1)\right)\frac{2^n}{\sqrt n};
\]
see Erd\H{o}s~\cite{Erdos1956} and the historical accounts in
\cite{Guy1982,DFX,Steinerberger2023}.  The constant was subsequently improved
by Guy, Elkies, Bae and Aliev
\cite{Guy1982,Elkies1986,Bae1996,Aliev2008}.  The best known leading constant
is $\sqrt{2/\pi}$, first obtained in unpublished work of Elkies and Gleason,
as recorded by Dubroff, Fox and Xu~\cite{DFX}.  They gave two short proofs of
\begin{equation}\label{eq:DFX-asymptotic}
 f(n)\ge
 \left(\sqrt{\frac2\pi}-o(1)\right)\frac{2^n}{\sqrt n}.
\end{equation}
Their isoperimetric proof also gives the finite estimate
\begin{equation}\label{eq:DFX-finite}
 f(n)\ge K_n,
 \qquad
 K_n:=\binom{n}{\lfloor n/2\rfloor}.
\end{equation}
Steinerberger later gave a Fourier-analytic proof of
\cref{eq:DFX-asymptotic}~\cite{Steinerberger2023}.  For upper bounds, Bohman
constructed sum-distinct sets with
\[
 f(n)\le 0.22002 \cdot 2^n
\]
for all sufficiently large $n$~\cite{Bohman1998}.

We now define the graph quantities used below.  The $n$-dimensional hypercube
$Q_n$ is the graph with vertex set $2^{[n]}$, where two subsets are adjacent
when they differ in exactly one element.  If a finite graph $G=(V,E)$ has
$m$ vertices, a \emph{numbering} of $G$ is a bijection
\[
 \eta:V\longrightarrow\{1,\ldots,m\}.
\]
Its bandwidth is
\[
 \bw(\eta):=\max_{uv\in E}|\eta(u)-\eta(v)|,
\]
and the bandwidth of $G$ is
\[
 \bw(G):=\min_{\eta}\bw(\eta).
\]
For a family $\mathcal F\subseteq2^{[n]}$, its \emph{exterior vertex boundary}
is the set of vertices outside $\mathcal F$ that are adjacent in $Q_n$ to at
least one member of $\mathcal F$.

\medskip
\noindent\textbf{Boundary versus bandwidth.}
The two arguments use different graph quantities.  In the isoperimetric proof
of Dubroff, Fox and Xu, one considers the half-cube
\[
 \{S\subseteq[n]:w(S)<\tfrac12 w([n])\}
\]
and estimates its exterior vertex boundary.  This gives the central-binomial
bound $K_n$ in \cref{eq:DFX-finite}.  Here we instead keep the increasing
order of all $2^n$ subset sums $w(S)$.  We prove that the bandwidth of this
numbering is at most $a_n$.  The exact formula in Theorem~\ref{thm:Harper} then gives
a strictly larger finite lower bound for every $n\ge3$.  Thus Dubroff, Fox and
Xu use one exterior vertex boundary, whereas the present argument uses the
bandwidth of the complete subset-sum numbering. A similar use of graph
bandwidth in a different subset-sum problem appears in recent work of
Korsky~\cite{Korsky2026}, where bandwidth is used to study subset sums avoiding
three-term arithmetic progressions.

Theorem~\ref{thm:main} states the exact finite bound and the coefficients of its
relative $1/n$ term.  The quantity $H_n$ is defined in that theorem,
while $K_n$ was defined in \cref{eq:DFX-finite}.

\begin{theorem}\label{thm:main}
For every $n\ge1$,
\begin{equation}\label{eq:main-finite}
 f(n)\ge H_n:=\sum_{j=0}^{n-1}\binom{j}{\lfloor j/2\rfloor}.
\end{equation}
Moreover, $H_n>K_n$ for every $n\ge3$, and, as $n\to\infty$,
\begin{equation}\label{eq:relative-K}
 \frac{H_n}{K_n}
 =\begin{cases}
  1+\dfrac{2}{3n}+O(n^{-2}),&n\text{ even},\\[6pt]
  1+\dfrac{4}{3n}+O(n^{-2}),&n\text{ odd}.
 \end{cases}
\end{equation}
Consequently,
\begin{equation}\label{eq:main-relative-f}
 f(n)\ge K_n
 \begin{cases}
  1+\dfrac{2}{3n}+O(n^{-2}),&n\text{ even},\\[6pt]
  1+\dfrac{4}{3n}+O(n^{-2}),&n\text{ odd}.
 \end{cases}
\end{equation}
\end{theorem}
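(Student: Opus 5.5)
We will establish the finite bound by showing that the subset-sum numbering has bandwidth at most $a_n$, and then invoke Harper's formula $\bw(Q_n)=\sum_{j=0}^{n-1}\binom{j}{\lfloor j/2\rfloor}$ (Theorem~\ref{thm:Harper}, as referenced in the introduction). Let $\eta(S)$ be the rank of $w(S)$ among all $2^n$ subset sums; this is a bijection because the sums are distinct. Take an edge $S\sim S\cup\{i\}$ with $i\notin S$. Then $w(S\cup\{i\})-w(S)=a_i\le a_n$. Every subset $T$ with $\eta(S)<\eta(T)\le\eta(S\cup\{i\})$ satisfies $w(S)<w(T)\le w(S)+a_i$. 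Since the sums are distinct integers, there are at most $a_i$ such $T$, so $|\eta(S\cup\{i\})-\eta(S)|\le a_i\le a_n$. Therefore $H_n=\bw(Q_n)\le\bw(\eta)\le a_n$. Minimizing over all admissible $A$ gives \cref{eq:main-finite}.

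For $H_n>K_n$ when $n\ge3$, I would first record the increments. We have $K_{n+1}-K_n=C_{m}$ when $n=2m$, since $\binom{2m+1}{m}-\binom{2m}{m}=\binom{2m}{m-1}$. When $n=2m+1$, we have $\binom{2m+2}{m+1}-\binom{2m+1}{m}=\binom{2m+1}{m}$. Meanwhile $H_{n+1}-H_n=K_n$. Rather than compare increments, I would derive a closed form. Pairing $j=2k$ and $j=2k+1$ gives $\binom{2k}{k}+\binom{2k+1}{k}$, and the identity $\binom{2k+1}{k}=\tfrac12\binom{2k+2}{k+1}$ lets us telescope using Catalan numbers $C_k=\binom{2k}{k}/(k+1)$. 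The target is an exact expression of the form $H_n=K_n+(\text{sum of lower-order central binomials})$, which is visibly larger than $K_n$ for $n\ge3$. For example, $H_3=1+1+2=4>3=K_3$, and an induction using $H_{n+1}-K_{n+1}=(H_n-K_n)+(K_n-(K_{n+1}-K_n))$ works provided $2K_n\ge K_{n+1}$, which holds since $K_{n+1}\le 2K_n$. Hence $H_n-K_n$ is nondecreasing from $n=3$ on, giving strictness.

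For the asymptotics, which I expect to be the main obstacle since the parity split must be tracked carefully, I would write $H_n=K_n+R_n$ with $R_n=\sum_{j<n}(2K_j-K_{j+1})$. The terms are $2K_{2m}-K_{2m+1}=\binom{2m}{m}-\binom{2m}{m-1}=C_m$ and $2K_{2m+1}-K_{2m+2}=0$. So $R_n=\sum_{m:\,2m<n}C_m$. Standard asymptotics give $\sum_{m\le M}C_m=\tfrac43C_M(1+O(1/M))$, because consecutive ratios tend to $4$, so the geometric tail contributes $1/(1-1/4)$. For $n=2M+1$ we get $R_n/K_n\sim\tfrac43 C_M/\binom{2M+1}{M}=\tfrac43\cdot\tfrac{2}{2M+2}\cdot\ldots$, which simplifies to $\tfrac{4}{3n}+O(n^{-2})$. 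For $n=2M+2$ the last term is still $C_M$ while $K_n$ roughly doubles, which yields $\tfrac{2}{3n}$. Plugging these into the bound gives \cref{eq:relative-K}, and \cref{eq:main-relative-f} follows immediately.
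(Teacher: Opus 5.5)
Your bandwidth step is the paper's argument; you count the $d$ sums in $(w(S),w(S)+a_i]$ where the paper counts the $d-1$ in the open interval. After that your route differs from the paper's.

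For $H_n>K_n$ you use the monotonicity $H_{n+1}-K_{n+1}=(H_n-K_n)+(2K_n-K_{n+1})$, together with $K_{n+1}\le 2K_n$ and the base case $H_3-K_3=1$. The paper instead reads strictness off the exact identities in Proposition~\ref{prop:finite-refinement}. Your argument is more elementary; the paper's gives the exact gap.

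For the asymptotics, you telescope against $2K_j-K_{j+1}$, which equals $C_{j/2}$ for even $j$ and $0$ for odd $j$. This yields the parity-free identity
\[
 H_n-K_n=\sum_{m=1}^{\lfloor (n-1)/2\rfloor} C_m .
\]
It is equivalent to Lemma~\ref{lem:Catalan-decomposition}, since $2\binom{2r}{r}-\binom{2r+1}{r}=C_r=K_{2r+1}/(2r+1)$. Its advantage is that it treats both parities at once. The odd coefficient comes out in one step as $\tfrac43 C_M/K_{2M+1}=\tfrac{4}{3n}$, instead of the paper's split $1+\tfrac13$. The even case then follows from $K_{2M+2}=2K_{2M+1}$, giving $\tfrac{2}{3(n-1)}=\tfrac{2}{3n}+O(n^{-2})$.

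There are three things to repair.

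\begin{enumerate}
\item With your definition $R_n=\sum_{j<n}(2K_j-K_{j+1})$ one gets $R_n=H_n-K_n+1$, not $H_n-K_n$. The $j=0$ term is $C_0=1$ and $K_0$ is never cancelled; for example $R_3=2$ while $H_3-K_3=1$. This is harmless asymptotically, but you should drop $m=0$.

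\item Two side remarks are wrong, though neither is used. The aside $K_{2m+1}-K_{2m}=C_m$ is false: in fact $\binom{2m}{m-1}=\binom{2m}{m}-C_m$, and for $m=2$ this is $4\ne 2$. Your intermediate expression for $C_M/\binom{2M+1}{M}$ is garbled; the exact value is $1/(2M+1)=1/n$.

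\item The substantive gap is the claim $\sum_{m\le M}C_m=\tfrac43 C_M(1+O(1/M))$. It does not follow from ``consecutive ratios tend to $4$''. That reasoning only gives $\tfrac43+o(1)$, hence an $o(1/n)$ remainder rather than the stated $O(n^{-2})$. You need the quantitative ratio
\[
 \frac{C_{m-1}}{C_m}=\frac{m+1}{2(2m-1)}=\frac14+\frac{3}{4(2m-1)},
\]
plus an actual summation argument. One option is the paper's Lemma~\ref{lem:Catalan-tail}: the recursion $q_{r+1}=1+\alpha_r q_r$ and the induction $|q_r-\tfrac43|\le 4/r$. Another is a direct estimate of the products $C_{M-k}/C_M=\prod_{i<k}\alpha_{M-i}$, split at $k\approx M/2$ and using $\alpha_r\le\tfrac12$ for $r\ge 2$ on the far part. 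This is exactly where the paper does its real analytic work.
\end{enumerate}
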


For odd $n$, the coefficient $4/3$ in \cref{eq:relative-K} has two separate
sources.  The conversion between two adjacent central binomial coefficients
contributes $1/n$, while a finite sum of Catalan numbers contributes
$1/(3n)$.

The paper is organized as follows, Section~\ref{sec:transfer} proves the exact lower bound
$f(n)\ge H_n$ by ordering the vertices of $Q_n$ by their subset sums and then
using Theorem~\ref{thm:Harper}.  Section~\ref{sec:catalan}
rewrites $H_n$ as a central binomial coefficient plus a finite sum of Catalan
numbers and estimates that sum.  Section~\ref{sec:asymptotics} compares $H_n$ with $K_n$ and computes the
coefficients of $1/n$ in Theorem~\ref{thm:main}.  Finally,
Section~\ref{sec:gaussian} uses Theorem~\ref{thm:Stirling} to express the lower bound
as a multiple of $\sqrt{2/\pi}\,2^n/\sqrt n$.

\section{From subset sums to hypercube bandwidth}\label{sec:transfer}

The goal of this section is to prove \cref{eq:main-finite}.  We first state
the exact value of the hypercube bandwidth, and then apply it to the numbering
obtained from the subset sums.

The graph-theoretic input is Theorem~\ref{thm:Harper}, proved by
Harper~\cite{Harper}.  A short recursive proof of the explicit formula was
later given by Wang, Wu and Dumitrescu~\cite{WangWuDumitrescu}.

\begin{theorem}\label{thm:Harper}
For every integer $n\ge1$,
\begin{equation}\label{eq:Harper}
 \bw(Q_n)=\sum_{j=0}^{n-1}\binom{j}{\lfloor j/2\rfloor}.
\end{equation}
\end{theorem}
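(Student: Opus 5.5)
The plan is to prove the two inequalities in \eqref{eq:Harper} separately. The lower bound will come from vertex isoperimetry in $Q_n$, and the upper bound from an explicit numbering: Harper's simplicial order, where subsets are ordered by size and lexicographically within each size. Throughout, write $B_n:=\sum_{j=0}^{n-1}\binom{j}{\lfloor j/2\rfloor}$.

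\emph{Lower bound.} Fix any numbering $\eta$ of $Q_n$ and any $k$. Let $S_k:=\eta^{-1}(\{1,\dots,k\})$, and let $\partial S_k$ be the exterior vertex boundary of $S_k$. Every vertex of $\partial S_k$ has a label greater than $k$. Hence the vertex of $\partial S_k$ with the largest label has label at least $k+|\partial S_k|$. It is adjacent to some vertex of $S_k$, whose label is at most $k$. Therefore $\bw(\eta)\ge|\partial S_k|$.

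The same argument applied to the inner boundary gives $\bw(\eta)\ge|\partial_{\mathrm{in}}S_k|$. Here $\partial_{\mathrm{in}}S_k$ consists of the vertices of $S_k$ that have a neighbour outside $S_k$. Its vertex with the smallest label has label at most $k-|\partial_{\mathrm{in}}S_k|+1$ and is adjacent to a vertex with label at least $k+1$.

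Consequently
\[
 \bw(Q_n)\ \ge\ \max_k\ \min_{|S|=k}\ \max\bigl(|\partial S|,\,|\partial_{\mathrm{in}} S|\bigr).
\]
Harper's vertex-isoperimetric theorem says that initial segments of the simplicial order minimize the exterior boundary among sets of a given size, and, by complementation, also the inner boundary. So the right-hand side can be evaluated on those initial segments. It remains to choose $k$ well. I would take $S$ to be the initial segment consisting of all sets of size $<m$ together with a suitable lexicographic initial part of the $m$-th level, with $m\approx n/2$. Its boundary is then a Hamming ball shell plus the upper shadow of the partial layer.

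\emph{Upper bound and the exact count.} For the simplicial numbering $\sigma$, an edge joins a set $T$ to some $T\cup\{x\}$. One has to show that the label difference along every such edge is at most $B_n$. Rather than computing the maximum directly, I would argue recursively, following Wang--Wu--Dumitrescu. Split $Q_n$ according to whether the element $n$ belongs to the set. Show that both the bandwidth of $\sigma$ and the best isoperimetric value from the previous step satisfy
\[
 F(n)=F(n-1)+\binom{n-1}{\lfloor (n-1)/2\rfloor},\qquad F(1)=1.
\]
The increment should come from the central layer $\binom{n-1}{\lfloor(n-1)/2\rfloor}$ of the copy of $Q_{n-1}$ that is inserted between consecutive layers. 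Iterating the recursion gives $B_n$ for both bounds, and hence equality.

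\emph{Main obstacle.} I expect the hardest step to be the exact evaluation: showing that the maximum over $k$ of the minimal boundary size, and the maximal edge stretch of $\sigma$, both equal exactly $B_n$. The isoperimetric input and the labelling argument above are standard. My first attempt would be to track how the lexicographic order on layer $m$ of $Q_n$ decomposes into layer $m$ of $Q_{n-1}$ followed by layer $m-1$ of $Q_{n-1}$ with the element $n$ adjoined. This decomposition should let me verify the recursion for the worst edge and for the worst cut simultaneously.
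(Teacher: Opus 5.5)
The paper does not prove this theorem; it quotes it from Harper, citing Wang--Wu--Dumitrescu for the closed form. So your outline should be read as a reconstruction of Harper's argument.

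Your lower-bound framework is sound. You have $\bw(\eta)\ge|\partial S_k|$, and by the vertex-isoperimetric theorem the minimum over $|S|=k$ is attained at the simplicial initial segment $I_k$. The inner-boundary variant adds nothing, since $|\partial_{\mathrm{in}}I_k|=|\partial I_{2^n-k}|$.

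The gap is that everything producing the number $B_n$ is deferred. You neither prove $\bw(\sigma)\le B_n$ nor evaluate $\max_k|\partial I_k|$. Moreover, the one concrete mechanism you propose for both is wrong. Splitting level $m$ of $Q_n$ as ``level $m$ of $Q_{n-1}$, then level $m-1$ of $Q_{n-1}$ with $n$ adjoined'' is not a property of the lexicographic order. In lex the sets containing $n$ are interleaved: $\{1,2\},\{1,3\},\{1,4\},\{1,5\},\{2,3\},\dots$. Your split describes the colexicographic order, and for size-then-colex both of your claims fail:
\begin{itemize}
\item In $Q_5$ that order gives $\{1,2\}$ label $7$ and $\{1,2,5\}$ label $21$, a stretch of $14>B_5=13$.
\item In $Q_4$ its initial segment (all sets of size $\le1$ plus $\{1,2\},\{1,3\},\{2,3\}$) has boundary $7$. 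The lex segment of the same size, with $\{1,4\}$ in place of $\{2,3\}$, has boundary $6$.
\end{itemize}
So an edge-by-edge recursion along your decomposition cannot close.

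Two ingredients are missing.

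\emph{Nestedness.} With lex inside levels, the upper shadow of an initial segment of level $m$ is an initial segment of level $m+1$. Hence $N[I_k]=I_{k+|\partial I_k|}$ for every $k$. An edge $uv$ with $\sigma(u)=k<\sigma(v)$ then satisfies $\sigma(v)-\sigma(u)\le|\partial I_k|$. This gives $\bw(\sigma)=\max_k|\partial I_k|$ with no separate analysis of edges.

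\emph{The evaluation.} Let $I$ consist of all sets of size $<m$ plus the first $t$ lex $m$-sets $L_t$. Then $|\partial I|=\binom nm-t+|\nabla L_t|$. Split on the element $1$, since the sets containing $1$ come first in lex. This shows that $h(n,m):=\max_t|\partial I|$ satisfies
\[
 h(n,m)=\binom{n-1}{m}+\max\bigl(h(n-1,m-1),\,h(n-1,m)\bigr).
\]
Hence $\max_m h(n,m)\le\binom{n-1}{\lfloor(n-1)/2\rfloor}+\max_m h(n-1,m)$. Equality holds at $m=\lfloor n/2\rfloor$ by induction, using that the maximum for $Q_{n-1}$ is attained at level $\lfloor(n-1)/2\rfloor$. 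This yields $\max_k|\partial I_k|=B_n$.

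With these two steps your outline becomes a complete proof. Without them it only gives $\bw(Q_n)\ge\max_k|\partial I_k|$, with the right-hand side unevaluated.
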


By Theorem~\ref{thm:Harper}, the right-hand side of \cref{eq:main-finite} is exactly
$\bw(Q_n)$.  Proposition~\ref{prop:transfer} connects this graph quantity with the
largest element $a_n$.

\begin{proposition}\label{prop:transfer}
Let
\[
 0<a_1<\cdots<a_n
\]
be integers whose subset sums are pairwise distinct.  Then
\[
 \bw(Q_n)\le a_n.
\]
Consequently,
\[
 f(n)\ge \bw(Q_n)=H_n.
\]
\end{proposition}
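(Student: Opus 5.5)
Our plan is to exhibit one explicit numbering of $Q_n$ with bandwidth at most $a_n$; the inequality $\bw(Q_n)\le a_n$ then follows from the definition of $\bw(Q_n)$ as a minimum. Since all $2^n$ sums $w(S)$ are pairwise distinct, we define $\eta(S)$ to be the rank of $w(S)$ among all subset sums, listed increasingly. So $\eta(\emptyset)=1$, $\eta([n])=2^n$, and $\eta$ is a bijection $2^{[n]}\to\{1,\ldots,2^n\}$.

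Next we bound $|\eta(S)-\eta(T)|$ for an edge $ST$ of $Q_n$. Without loss of generality $T=S\cup\{i\}$ with $i\notin S$, so $w(T)=w(S)+a_i$ and $\eta(T)>\eta(S)$. We have
\[
 \eta(T)-\eta(S)=\#\{R\subseteq[n]: w(S)<w(R)\le w(T)\}.
\]
All these values $w(R)$ are distinct integers in the half-open interval $(w(S),w(S)+a_i]$, which contains exactly $a_i$ integers. Hence $\eta(T)-\eta(S)\le a_i\le a_n$, and taking the maximum over edges gives $\bw(\eta)\le a_n$.

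The count can be sharpened: the value $w(T)$ itself is counted, while $w(S)$ is excluded. The difference is therefore at most $a_i$, not $a_i+1$, which is exactly what the bound requires. Apart from getting this off-by-one right, there is no real obstacle; the distinctness of subset sums is used twice, once for $\eta$ to be well defined and once for the counting.

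For the consequence, we apply the inequality to a set attaining $f(n)$, giving $f(n)\ge\bw(Q_n)$. Theorem~\ref{thm:Harper} then identifies $\bw(Q_n)$ with $H_n$.
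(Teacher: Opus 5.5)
Your proposal is correct and follows the paper's proof: you number the vertices of $Q_n$ by the rank of $w(S)$, and you bound the rank gap along each edge $S$, $S\cup\{i\}$ by counting integers in an interval of length $a_i$. The only cosmetic difference is that you count ranks in the half-open interval $(w(S),w(S)+a_i]$ to get $d\le a_i$ directly, whereas the paper counts the $d-1$ vertices strictly between the endpoints to get $d-1\le a_i-1$.
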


\begin{proof}
Recall from the introduction that
\[
 w(S)=\sum_{i\in S}a_i\qquad(S\subseteq[n]).
\]
The $2^n$ numbers $w(S)$ are pairwise distinct integers.  Order the vertices
of $Q_n$ by increasing $w(S)$ and define
\[
 \eta_A(S):=1+\#\{T\subseteq[n]:w(T)<w(S)\}.
\]
Because the subset sums are distinct, $\eta_A$ is a bijection from
$2^{[n]}$ to $\{1,\ldots,2^n\}$, hence a numbering of $Q_n$.

Consider an edge of $Q_n$.  After exchanging its endpoints if necessary, it
has the form
\[
 S\quad\text{and}\quad S\cup\{i\},
 \qquad i\notin S.
\]
Their subset sums satisfy
\[
 w(S\cup\{i\})-w(S)=a_i>0,
\]
so $\eta_A(S\cup\{i\})>\eta_A(S)$.  Set
\[
 d:=\eta_A(S\cup\{i\})-\eta_A(S).
\]
Exactly $d-1$ vertices occur strictly between the two endpoints in the
numbering.  Since the numbering is increasing in $w$, their subset sums are
$d-1$ distinct integers strictly between the two integers $w(S)$ and
$w(S)+a_i$.  The open interval
\[
 (w(S),w(S)+a_i)
\]
contains exactly the $a_i-1$ integers
\[
 w(S)+1,\ldots,w(S)+a_i-1.
\]
Therefore
\[
 d-1\le a_i-1,
\]
and hence
\[
 |\eta_A(S\cup\{i\})-\eta_A(S)|=d\le a_i\le a_n.
\]
This estimate holds for every edge of $Q_n$.  By the definition of the
bandwidth of a numbering,
\[
 \bw(\eta_A)\le a_n.
\]
Since $\bw(Q_n)$ is the minimum of $\bw(\eta)$ over all numberings $\eta$ of
$Q_n$,
\[
 \bw(Q_n)\le \bw(\eta_A)\le a_n.
\]
Finally, minimizing $a_n$ over all $n$-element sets with pairwise distinct
subset sums gives
\[
 f(n)\ge\bw(Q_n).
\]
Combining this with Theorem~\ref{thm:Harper} proves \cref{eq:main-finite}.
\end{proof}

\section{Exact formulas for \texorpdfstring{$H_n$}{H n} and a Catalan sum}\label{sec:catalan}

The goal of this section is to express $H_n$ as a central binomial coefficient
plus a finite sum of Catalan numbers, and then to estimate that Catalan sum.

For $j\ge0$, define the $j$th Catalan number by
\[
 C_j:=\frac{1}{j+1}\binom{2j}{j}.
\]
For $r\ge1$, put
\[
 \Gamma_r:=\sum_{j=1}^{r-1}C_j,
\]
with the convention $\Gamma_1=0$.

Lemma~\ref{lem:Catalan-decomposition} rewrites $H_n$ in terms of one central
binomial coefficient and the Catalan sum $\Gamma_r$.

\begin{lemma}\label{lem:Catalan-decomposition}
For every $r\ge1$,
\begin{equation}\label{eq:Catalan-decomposition}
 H_{2r}=\binom{2r}{r}+\Gamma_r,
 \qquad
 H_{2r+1}=2\binom{2r}{r}+\Gamma_r.
\end{equation}
\end{lemma}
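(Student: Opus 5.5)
We argue by induction on $r$, using only the recursion built into the definition of $H_n$, namely $H_{n+1}=H_n+\binom{n}{\lfloor n/2\rfloor}$. Specialised to the two parities, it gives
\[
 H_{2r+1}=H_{2r}+\binom{2r}{r},
 \qquad
 H_{2r+2}=H_{2r+1}+\binom{2r+1}{r}.
\]
The first of these shows that the odd formula follows immediately from the even one for the same $r$. So the induction only needs to carry the even formula forward.

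\emph{Base case.} For $r=1$ we have $H_2=\binom00+\binom10=2=\binom21+\Gamma_1$, since $\Gamma_1=0$. Then $H_3=H_2+\binom21=4=2\binom21+\Gamma_1$.

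\emph{Inductive step.} Assume $H_{2r+1}=2\binom{2r}{r}+\Gamma_r$. Then
\[
 H_{2r+2}=2\binom{2r}{r}+\binom{2r+1}{r}+\Gamma_r .
\]
We want to show this equals $\binom{2r+2}{r+1}+\Gamma_{r+1}=\binom{2r+2}{r+1}+\Gamma_r+C_r$. After cancelling $\Gamma_r$, it remains to prove the binomial identity
\[
 2\binom{2r}{r}+\binom{2r+1}{r}=\binom{2r+2}{r+1}+C_r .
\]
This is the only step needing any real computation, and it is routine. Pascal's rule gives $\binom{2r+2}{r+1}=2\binom{2r+1}{r}$, so the identity reduces to
\[
 2\binom{2r}{r}-\binom{2r+1}{r}=C_r .
\]
Next use $\binom{2r+1}{r}=\frac{2r+1}{r+1}\binom{2r}{r}$. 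The left side becomes $\bigl(2-\frac{2r+1}{r+1}\bigr)\binom{2r}{r}=\frac{1}{r+1}\binom{2r}{r}=C_r$, which is exactly the definition of $C_r$.

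The odd formula at $r+1$ then follows from the first recursion, which completes the induction. No serious obstacle is expected. The only point to watch is the indexing: $\Gamma_{r+1}$ adds the term $C_r$, so that term is precisely what the binomial identity must produce, and the convention $\Gamma_1=0$ has to match the base case.
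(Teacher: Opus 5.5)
Your proof is correct and is essentially the paper's argument. Your inductive step rests on the identity $\binom{2r}{r}+\binom{2r+1}{r}=\binom{2r+2}{r+1}-\binom{2r}{r}+C_r$, which is exactly the per-pair identity the paper uses after grouping the summands $m=2j,2j+1$, and your induction is the unrolled form of the paper's telescoping sum (you verify the identity via Pascal's rule $\binom{2r+2}{r+1}=2\binom{2r+1}{r}$ rather than writing each term as a rational multiple of $\binom{2j}{j}$, a cosmetic difference).
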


\begin{proof}
For $j\ge0$, write
\[
 B_j:=\binom{2j}{j}.
\]
By the definition of $H_n$ in Theorem~\ref{thm:main},
\[
 H_{2r}=\sum_{m=0}^{2r-1}\binom{m}{\lfloor m/2\rfloor}.
\]
Pair the terms with indices $m=2j$ and $m=2j+1$.  This gives
\begin{equation}\label{eq:paired-Harper}
 H_{2r}=\sum_{j=0}^{r-1}
 \left(B_j+\binom{2j+1}{j}\right).
\end{equation}
We now rewrite one pair.  The elementary identities
\[
 \binom{2j+1}{j}=\frac{2j+1}{j+1}B_j,
 \qquad
 B_{j+1}=\frac{2(2j+1)}{j+1}B_j,
 \qquad
 C_j=\frac{1}{j+1}B_j
\]
show that
\begin{align*}
 B_{j+1}-B_j+C_j
 =\left(\frac{2(2j+1)}{j+1}-1+\frac{1}{j+1}\right)B_j
 =\frac{3j+2}{j+1}B_j.
\end{align*}
On the other hand,
\begin{align*}
 B_j+\binom{2j+1}{j}
 =\left(1+\frac{2j+1}{j+1}\right)B_j
 =\frac{3j+2}{j+1}B_j.
\end{align*}
Thus, for every $j\ge0$,
\begin{equation}\label{eq:pair-telescope}
 B_j+\binom{2j+1}{j}=B_{j+1}-B_j+C_j.
\end{equation}
Substituting \cref{eq:pair-telescope} into \cref{eq:paired-Harper} yields
\begin{align*}
 H_{2r}
 =\sum_{j=0}^{r-1}(B_{j+1}-B_j)
   +\sum_{j=0}^{r-1}C_j
 =B_r-B_0+C_0+\sum_{j=1}^{r-1}C_j.
\end{align*}
Since $B_0=C_0=1$, the two constant terms cancel.  Hence
\[
 H_{2r}=B_r+\Gamma_r
       =\binom{2r}{r}+\Gamma_r.
\]
Finally, the only extra summand in $H_{2r+1}$ is the term with index $2r$:
\[
 H_{2r+1}=H_{2r}+\binom{2r}{r}.
\]
Using the formula just proved for $H_{2r}$ gives
\[
 H_{2r+1}=2\binom{2r}{r}+\Gamma_r.
\]
\end{proof}

Lemma~\ref{lem:Catalan-tail} estimates the Catalan sum to the precision needed for
a relative error of order $n^{-2}$ in Theorem~\ref{thm:main}.

\begin{lemma}\label{lem:Catalan-tail}
As $r\to\infty$,
\begin{equation}\label{eq:Catalan-tail}
 \Gamma_r=\left(\frac43+O(r^{-1})\right)C_{r-1}.
\end{equation}
\end{lemma}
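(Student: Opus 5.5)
The plan is to divide $\Gamma_r$ by its largest term $C_{r-1}$ and compare the resulting finite sum with a geometric series of ratio $1/4$. Write
\[
 \frac{\Gamma_r}{C_{r-1}}=\sum_{k=0}^{r-2}\frac{C_{r-1-k}}{C_{r-1}}
 =\sum_{k=0}^{r-2}\rho_{r-1,k},\qquad \rho_{m,k}:=\frac{C_{m-k}}{C_m}.
\]
The ratio of consecutive Catalan numbers is exactly
\[
 \frac{C_{j-1}}{C_j}=\frac{j+1}{2(2j-1)}=\frac14\Bigl(1+\frac{3}{2j-1}\Bigr).
\]
So $\rho_{m,k}=\prod_{j=m-k+1}^{m}\frac{C_{j-1}}{C_j}$ is a product of $k$ factors, each slightly larger than $1/4$. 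Since $\sum_{k\ge0}4^{-k}=4/3$, the claim \cref{eq:Catalan-tail} is exactly the statement that these perturbations contribute only $O(1/r)$ in total.

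\textbf{Upper bound.} Put $m=r-1$ and split the sum at $k=\lfloor m/2\rfloor$.

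For $k\le m/2$, every index in the product satisfies $j\ge m/2$. Each factor is then at most $\frac14(1+c/m)$ for an absolute constant $c$, so
\[
 \rho_{m,k}\le 4^{-k}(1+c/m)^k\le 4^{-k}e^{ck/m}.
\]
The resulting sum is $\sum_k 4^{-k}(1+O(k/m))=4/3+O(1/m)$, using $e^{x}\le 1+xe^{x}$ and the convergence of $\sum k\,4^{-k}e^{ck/m}$ for large $m$.

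For $k>m/2$, we use that $C_j$ is nondecreasing. Hence $\rho_{m,k}\le \rho_{m,\lfloor m/2\rfloor}$, and that ratio is itself $\le (4^{-1}e^{2c/m})^{m/2}$, which is exponentially small. Multiplying by the at most $m$ such terms keeps the contribution exponentially small, certainly $O(1/m)$.

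\textbf{Lower bound.} Each factor is at least $1/4$, so $\rho_{m,k}\ge 4^{-k}$. Therefore
\[
 \frac{\Gamma_r}{C_{r-1}}\ge\sum_{k=0}^{r-2}4^{-k}=\frac43-O(4^{-r}).
\]
Together with the upper bound this gives $\Gamma_r/C_{r-1}=4/3+O(1/r)$. The case $r=1$, where $\Gamma_1=0$, is irrelevant to an asymptotic statement.

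The main obstacle is making the uniform bound on the factors honest near the bottom of the range, where $j$ is small. There the factors $(j+1)/(2(2j-1))$ are not close to $1/4$; for instance the factor at $j=1$ equals $1$. This is exactly why the sum is split: the small-$j$ factors only appear in terms with $k>m/2$, which the monotonicity argument handles crudely. The only care needed is to check that $c$ can be taken as, say, $c=4$ once $j\ge m/2\ge2$, since $3/(2j-1)\le 3/(m-1)$.
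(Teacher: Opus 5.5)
Your proof is correct, but it takes a different route from the paper.

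\textbf{The paper's route.} The paper never expands $\Gamma_r/C_{r-1}$ as a sum. It sets $q_r=\Gamma_r/C_{r-1}$ and derives the one-step recurrence $q_{r+1}=1+\alpha_r q_r$, where $\alpha_r=C_{r-1}/C_r=(r+1)/(2(2r-1))$. Writing $q_r=\frac43+e_r$ gives $e_{r+1}=\alpha_r e_r+\frac{1}{2r-1}$. It then proves $|e_r|\le 4/r$ for all $r\ge 2$ by induction, using only $\alpha_r\le\frac12$ and the base value $e_2=-\frac13$.

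\textbf{Your route.} Your sum $\sum_k\rho_{r-1,k}$ is exactly this recurrence unrolled, and you compare it termwise with $\sum_k 4^{-k}$. This makes it transparent why the constant is $\frac43$: it is the geometric series for the limiting ratio $\frac14$, equivalently the fixed point of $q=1+q/4$. It also gives the one-sided bound $q_r\ge\frac43\bigl(1-4^{1-r}\bigr)$ at no cost.

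\textbf{What each buys.} Your approach must handle the factors with small $j$ separately, via the split at $k=\lfloor m/2\rfloor$ and the monotonicity of $C_j$. Your constants are also implicit and valid only for large $m$. The paper's contraction argument on $e_r$ absorbs those early, far-from-$\frac14$ factors automatically through the base case. It yields an explicit, effective bound for every $r\ge 2$.

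\textbf{A small simplification.} In the head of your sum you can skip the $e^x\le 1+xe^x$ step. Summing directly, $\sum_{k\ge0}\bigl(\tfrac14(1+\tfrac cm)\bigr)^k=\frac{4}{3-c/m}=\frac43+O(1/m)$. Moreover, since the head indices satisfy $j\ge m/2+1$, the choice $c=3$ already works for every $m$.
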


\begin{proof}
For $r\ge2$, define
\[
 q_r:=\frac{\Gamma_r}{C_{r-1}}.
\]
Since
\[
 \Gamma_{r+1}=\Gamma_r+C_r,
\]
we have
\begin{align*}
 q_{r+1}
 &=\frac{\Gamma_{r+1}}{C_r}
 =\frac{\Gamma_r+C_r}{C_r}
 =1+\frac{C_{r-1}}{C_r}\frac{\Gamma_r}{C_{r-1}}=1+\alpha_r q_r,
\end{align*}
where
\begin{align}
 \alpha_r
 :=\frac{C_{r-1}}{C_r}
 =\frac{\frac1r\binom{2r-2}{r-1}}
         {\frac1{r+1}\binom{2r}{r}}
 =\frac{r+1}{r}\frac{r^2}{(2r)(2r-1)}
 =\frac{r+1}{2(2r-1)}.
 \label{eq:alpha-r}
\end{align}
Thus
\begin{equation}\label{eq:q-rec}
 q_{r+1}=1+\alpha_rq_r.
\end{equation}

Write
\[
 q_r=\frac43+e_r.
\]
Substituting this into \cref{eq:q-rec} gives
\begin{align*}
 e_{r+1}
 =1+\alpha_r\left(\frac43+e_r\right)-\frac43
 =\alpha_re_r+\left(\frac{4\alpha_r}{3}-\frac13\right).
\end{align*}
Using \cref{eq:alpha-r},
\begin{align*}
 \frac{4\alpha_r}{3}-\frac13
 =\frac{2(r+1)}{3(2r-1)}-\frac13
 =\frac{2r+2-(2r-1)}{3(2r-1)}
 =\frac{1}{2r-1}.
\end{align*}
Therefore
\begin{equation}\label{eq:e-rec}
 e_{r+1}=\alpha_re_r+\frac{1}{2r-1}.
\end{equation}
For $r\ge2$,
\[
 0<\alpha_r=\frac{r+1}{2(2r-1)}\le\frac12.
\]
We now prove by induction that
\begin{equation}\label{eq:e-bound}
 |e_r|\le\frac4r\qquad(r\ge2).
\end{equation}
For $r=2$,
\[
 \Gamma_2=C_1=1,
 \qquad
 q_2=1,
 \qquad
 e_2=-\frac13,
\]
so \cref{eq:e-bound} holds.  Assume it holds for some $r\ge2$.  Then
\cref{eq:e-rec} and $\alpha_r\le1/2$ give
\[
 |e_{r+1}|
 \le\frac12\frac4r+\frac{1}{2r-1}
 =\frac2r+\frac{1}{2r-1}.
\]
It remains to compare this with $4/(r+1)$.  After subtracting $2/r$ from
both sides, the required inequality is
\[
 \frac{1}{2r-1}\le\frac4{r+1}-\frac2r
 =\frac{2(r-1)}{r(r+1)}.
\]
Multiplying by the positive quantity $r(r+1)(2r-1)$, this is equivalent to
\[
 r(r+1)\le2(r-1)(2r-1).
\]
The difference between the right- and left-hand sides is
\[
 2(r-1)(2r-1)-r(r+1)
 =3r^2-7r+2
 =(3r-1)(r-2),
\]
which is nonnegative for $r\ge2$.  Hence
\[
 |e_{r+1}|\le\frac4{r+1},
\]
and the induction is complete.  Therefore $e_r=O(r^{-1})$, and since
$q_r=4/3+e_r$,
\[
 \Gamma_r=q_rC_{r-1}
 =\left(\frac43+O(r^{-1})\right)C_{r-1}.
\]
\end{proof}

\section{Comparison with \texorpdfstring{$K_n$}{K n} and the coefficients of \texorpdfstring{$1/n$}{1/n}}\label{sec:asymptotics}

The goal of this section is to prove the remaining assertions of
Theorem~\ref{thm:main}.  We first compare $H_n$ exactly with the central-binomial
bound $K_n$ from \cref{eq:DFX-finite}, and then use
Lemma~\ref{lem:Catalan-tail} to obtain the coefficients $2/3$ and $4/3$.

Proposition~\ref{prop:finite-refinement} gives the exact comparison between $H_n$ and
$K_n$.  It also shows that the bandwidth bound is strictly stronger than
$K_n$ for every $n\ge3$.

\begin{proposition}\label{prop:finite-refinement}
For every $r\ge1$,
\begin{align}
 H_{2r}-K_{2r}&=\Gamma_r,
 \label{eq:even-refinement}\\
 H_{2r+1}-K_{2r+1}
 &=\frac{K_{2r+1}}{2r+1}+\Gamma_r.
 \label{eq:odd-refinement}
\end{align}
Consequently,
\[
 H_n>K_n\qquad(n\ge3).
\]
\end{proposition}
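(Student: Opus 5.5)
The plan is to read both identities directly off Lemma~\ref{lem:Catalan-decomposition}, and then to get positivity from the fact that $\Gamma_r>0$ once $r\ge2$.

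\emph{Even case.} Here $K_{2r}=\binom{2r}{r}$. Subtracting this from $H_{2r}=\binom{2r}{r}+\Gamma_r$ gives \cref{eq:even-refinement} immediately.

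\emph{Odd case.} Here $K_{2r+1}=\binom{2r+1}{r}$. The first step is to express $\binom{2r}{r}$ through $K_{2r+1}$. From the ratio already used in the proof of Lemma~\ref{lem:Catalan-decomposition}, namely $\binom{2r+1}{r}=\frac{2r+1}{r+1}\binom{2r}{r}$, we get
\[
 \binom{2r}{r}=\frac{r+1}{2r+1}K_{2r+1}.
\]
Hence
\[
 2\binom{2r}{r}=\frac{2r+2}{2r+1}K_{2r+1}=K_{2r+1}+\frac{K_{2r+1}}{2r+1}.
\]
Substituting this into $H_{2r+1}=2\binom{2r}{r}+\Gamma_r$ and subtracting $K_{2r+1}$ yields \cref{eq:odd-refinement}.

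\emph{Strict inequality.} For odd $n=2r+1\ge3$ we have $r\ge1$. The term $K_{2r+1}/(2r+1)$ is positive and $\Gamma_r\ge0$, so $H_n>K_n$. For even $n=2r\ge4$ we have $r\ge2$, so $\Gamma_r\ge C_1=1>0$, and again $H_n>K_n$. The case $n=2$ is correctly excluded, since $\Gamma_1=0$ gives $H_2=K_2=2$.

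There is no real obstacle here. The only point requiring care is the binomial ratio conversion in the odd case, together with checking that the thresholds $r\ge1$ for odd $n$ and $r\ge2$ for even $n$ exactly match $n\ge3$.
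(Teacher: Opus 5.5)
Your proposal is correct and follows the paper's proof step for step. Both identities are read off Lemma~\ref{lem:Catalan-decomposition}, with the odd case using $\binom{2r+1}{r}=\frac{2r+1}{r+1}\binom{2r}{r}$ to rewrite $2\binom{2r}{r}=K_{2r+1}+K_{2r+1}/(2r+1)$, and strictness comes from $K_{2r+1}/(2r+1)>0$ for odd $n$ and from $\Gamma_r\ge C_1=1$ for even $n\ge4$; your extra check $H_2=K_2=2$ confirms that the threshold $n\ge3$ cannot be lowered.
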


\begin{proof}
For even dimension,
\[
 K_{2r}=\binom{2r}{r}.
\]
The first identity in Lemma~\ref{lem:Catalan-decomposition} is
\[
 H_{2r}=\binom{2r}{r}+\Gamma_r,
\]
so subtracting $K_{2r}$ gives
\[
 H_{2r}-K_{2r}=\Gamma_r.
\]

For odd dimension, put
\[
 N:=\binom{2r}{r}.
\]
The adjacent binomial-coefficient identity gives
\[
 K_{2r+1}=\binom{2r+1}{r}
 =\frac{2r+1}{r+1}N.
\]
Solving for $N$,
\[
 N=\frac{r+1}{2r+1}K_{2r+1}.
\]
Therefore
\begin{align*}
 2N
 =\frac{2r+2}{2r+1}K_{2r+1}
 =\left(1+\frac{1}{2r+1}\right)K_{2r+1}
 =K_{2r+1}+\frac{K_{2r+1}}{2r+1}.
\end{align*}
The second identity in Lemma~\ref{lem:Catalan-decomposition} says
\[
 H_{2r+1}=2N+\Gamma_r.
\]
Substituting the previous expression for $2N$ yields
\[
 H_{2r+1}
 =K_{2r+1}+\frac{K_{2r+1}}{2r+1}+\Gamma_r,
\]
which is \cref{eq:odd-refinement}.

If $n=2r+1\ge3$, the term $K_{2r+1}/(2r+1)$ is positive.  If
$n=2r\ge4$, then $r\ge2$ and $\Gamma_r$ contains the positive term
$C_1=1$.  Hence $H_n>K_n$ for every $n\ge3$.
\end{proof}

We now compute the coefficients of $1/n$ in Theorem~\ref{thm:main}.  Every
conversion between $H_n$, $K_n$ and $C_{r-1}$ is written explicitly.

\begin{proof}[Proof of the asymptotic assertions in Theorem~\ref{thm:main}]
\medskip
\noindent\emph{Even dimensions.}
Let $n=2r$.  By Lemma~\ref{lem:Catalan-decomposition},
\[
 H_n=K_n+\Gamma_r,
 \qquad
 K_n=\binom{2r}{r}.
\]
From the definition of $C_{r-1}$,
\begin{align*}
 C_{r-1}
 =\frac1r\binom{2r-2}{r-1}
 =\frac1r\frac{r^2}{(2r)(2r-1)}\binom{2r}{r}
 =\frac{K_n}{2(2r-1)}
 =\frac{K_n}{2(n-1)}.
\end{align*}
By Lemma~\ref{lem:Catalan-tail}, and since $r^{-1}=2n^{-1}$,
\begin{align*}
 \frac{\Gamma_r}{K_n}
 =\left(\frac43+O(n^{-1})\right)\frac{1}{2(n-1)}
 =\frac{2}{3(n-1)}+O(n^{-2}).
\end{align*}
Moreover,
\[
 \frac{2}{3(n-1)}-\frac{2}{3n}
 =\frac{2}{3n(n-1)}=O(n^{-2}),
\]
so
\[
 \frac{\Gamma_r}{K_n}=\frac{2}{3n}+O(n^{-2}).
\]
Therefore
\[
 \frac{H_n}{K_n}=1+\frac{2}{3n}+O(n^{-2}).
\]

\medskip
\noindent\emph{Odd dimensions.}
Let $n=2r+1$ and put
$
 N:=\binom{2r}{r}.
$
Then
\[
 K_n=\binom{2r+1}{r}=\frac{2r+1}{r+1}N,
\]
so
\begin{align}
 \frac{2N}{K_n}
 =\frac{2(r+1)}{2r+1}
 =\frac{n+1}{n}
 =1+\frac1n.
 \label{eq:odd-central}
\end{align}
Also,
\begin{align*}
 C_{r-1}
 =\frac1r\binom{2r-2}{r-1}
 =\frac{N}{2(2r-1)}.
\end{align*}
Therefore
\begin{align}
 \frac{C_{r-1}}{K_n}
 =\frac{N}{2(2r-1)}\frac{r+1}{(2r+1)N}
 =\frac{r+1}{2(2r-1)(2r+1)}
 =\frac{n+1}{4n(n-2)}.
 \label{eq:odd-Catalan-normalization}
\end{align}
By Lemma~\ref{lem:Catalan-tail}, with $r^{-1}=O(n^{-1})$,
\begin{align*}
 \frac{\Gamma_r}{K_n}
 =\left(\frac43+O(n^{-1})\right)
   \frac{n+1}{4n(n-2)}
 =\frac{n+1}{3n(n-2)}+O(n^{-2}).
\end{align*}
Since
\[
 \frac{n+1}{3n(n-2)}-\frac1{3n}
 =\frac{1}{n(n-2)}=O(n^{-2}),
\]
we obtain
\[
 \frac{\Gamma_r}{K_n}=\frac1{3n}+O(n^{-2}).
\]
Finally, Lemma~\ref{lem:Catalan-decomposition} gives $H_n=2N+\Gamma_r$, so
\cref{eq:odd-central} yields
\[
 \frac{H_n}{K_n}
 =1+\frac1n+\frac1{3n}+O(n^{-2})
 =1+\frac{4}{3n}+O(n^{-2}).
\]
This proves \cref{eq:relative-K}; combining it with $f(n)\ge H_n$ gives
\cref{eq:main-relative-f} and completes the proof of Theorem~\ref{thm:main}.
\end{proof}

\section{Expansion relative to \texorpdfstring{$\sqrt{2/\pi}\,2^n/\sqrt n$}{sqrt(2/pi) times 2 to the n over sqrt(n)}}\label{sec:gaussian}

The goal of this section is to derive the coefficients $5/12$ and $7/12$ when
the lower bound is written as a multiple of
$\sqrt{2/\pi}\,2^n/\sqrt n$, as in \cref{eq:DFX-asymptotic}.

The factorial asymptotic used below is stated in Theorem~\ref{thm:Stirling}; see
Robbins~\cite{Robbins1955}.

\begin{theorem}\label{thm:Stirling}
As $m\to\infty$ through positive integers,
\begin{equation}\label{eq:Stirling}
 m!=\sqrt{2\pi m}\left(\frac me\right)^m
 \left(1+\frac{1}{12m}+O(m^{-2})\right).
\end{equation}
\end{theorem}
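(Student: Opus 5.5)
We follow Robbins' elementary approach: study the logarithmic error term directly, show by a telescoping argument that it equals a constant plus $1/(12m)+O(m^{-2})$, and identify the constant with Wallis' product. Set
\[
 d_m:=\log m!-\left(m+\tfrac12\right)\log m+m .
\]
Then \cref{eq:Stirling} is equivalent to two claims: $d_m\to\log\sqrt{2\pi}$, and $d_m-\log\sqrt{2\pi}=\frac1{12m}+O(m^{-2})$. The second claim suffices because exponentiating gives $e^{1/(12m)+O(m^{-2})}=1+\frac1{12m}+O(m^{-2})$.

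\emph{Step 1 (one-step difference).} A direct computation gives
\[
 d_m-d_{m+1}=\left(m+\tfrac12\right)\log\frac{m+1}{m}-1 .
\]
Put $t:=1/(2m+1)$, so that $(m+1)/m=(1+t)/(1-t)$ and $m+\tfrac12=1/(2t)$. Expanding $\log\frac{1+t}{1-t}=2\sum_{k\ge0}t^{2k+1}/(2k+1)$ yields
\[
 d_m-d_{m+1}=\sum_{k\ge1}\frac{t^{2k}}{2k+1}.
\]
This series is positive, so $d_m$ is decreasing. It is also bounded above by $\frac{t^2}{3}\sum_{k\ge0}t^{2k}=\frac{t^2}{3(1-t^2)}=\frac1{12m(m+1)}$.

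\emph{Step 2 (telescoping with error).} From Step 1,
\[
 d_m-d_{m+1}=\frac{t^2}{3}+O(t^4)=\frac{1}{12(m+\frac12)^2}+O(m^{-4}).
\]
Moreover
\[
 \frac1{m(m+1)}-\frac1{(m+\frac12)^2}=\frac{1/4}{m(m+1)(m+\frac12)^2}=O(m^{-4}),
\]
so $d_m-d_{m+1}=\frac1{12m}-\frac1{12(m+1)}+O(m^{-4})$. The sequence $d_m$ is decreasing, and by the upper bound in Step 1 we have $d_1-d_m\le\sum_{k<m}\frac{1}{12k(k+1)}<\frac1{12}$, so $d_m$ is bounded below and converges to a limit $C$. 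Summing the last relation from $m$ to $\infty$ then gives
\[
 d_m-C=\frac1{12m}+\sum_{k\ge m}O(k^{-4})=\frac1{12m}+O(m^{-3}),
\]
which is more than enough.

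\emph{Step 3 (the constant).} It remains to show $e^{C}=\sqrt{2\pi}$. We would insert $m!\sim e^{C}\sqrt m(m/e)^m$ into Wallis' product in the form
\[
 \frac{2^{4m}(m!)^4}{((2m)!)^2(2m+1)}\to\frac{\pi}{2},
\]
which follows from the standard comparison of the integrals $\int_0^{\pi/2}\sin^k x\,dx$. The left-hand side tends to $e^{2C}/4$, so $e^{2C}=2\pi$.

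We expect this identification of the constant to be the main obstacle: Steps 1 and 2 are routine series manipulations, whereas Step 3 relies on the external Wallis input. Since the paper treats \cref{eq:Stirling} as a cited result (Robbins), it is also acceptable to invoke Wallis' formula as known rather than rederive it.
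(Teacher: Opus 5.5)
Your proof is correct. It is essentially the argument of Robbins' note, which the paper cites instead of giving a proof of its own: your $d_m-d_{m+1}=\sum_{k\ge1}\frac{1}{(2k+1)(2m+1)^{2k}}$ is the same series Robbins expands and telescopes against $\frac1{12}\bigl(\frac1m-\frac1{m+1}\bigr)$, and the constant $\sqrt{2\pi}$ comes from Wallis' product as in the standard treatment.

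The one difference is that Robbins also bounds that series from below, obtaining the explicit two-sided estimate $\frac1{12m+1}<d_m-\log\sqrt{2\pi}<\frac1{12m}$ for every $m\ge1$. Your version $\frac1{12m}+O(m^{-3})$ is only asymptotic, but it has a better error order, and either is more than enough for the $O(m^{-2})$ statement the paper uses.
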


Lemma~\ref{lem:Kn-Stirling} gives the asymptotic expansion of $K_n$ for even and
odd $n$.

\begin{lemma}\label{lem:Kn-Stirling}
As $n\to\infty$,
\begin{equation}\label{eq:Kn-Stirling}
 K_n=\sqrt{\frac2\pi}\frac{2^n}{\sqrt n}
 \begin{cases}
  1-\dfrac{1}{4n}+O(n^{-2}),&n\text{ even},\\[6pt]
  1-\dfrac{3}{4n}+O(n^{-2}),&n\text{ odd}.
 \end{cases}
\end{equation}
\end{lemma}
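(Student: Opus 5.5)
We first treat the even case $n=2m$ by applying Theorem~\ref{thm:Stirling} directly to $\binom{2m}{m}=(2m)!/(m!)^2$. Substituting \cref{eq:Stirling} for $(2m)!$ and for $m!$, the powers of $e$ cancel and the main factors combine as
\[
 \frac{\sqrt{4\pi m}\,(2m/e)^{2m}}{2\pi m\,(m/e)^{2m}}=\frac{4^m}{\sqrt{\pi m}}.
\]
The correction factor is then
\[
 \frac{1+\frac{1}{24m}+O(m^{-2})}{\bigl(1+\frac{1}{12m}+O(m^{-2})\bigr)^2}
 =1-\frac{1}{8m}+O(m^{-2}).
\]
Now rewrite $4^m/\sqrt{\pi m}$ as $\sqrt{2/\pi}\,2^n/\sqrt n$, which holds because $\sqrt{\pi m}=\sqrt{\pi n/2}$. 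Finally, with $1/(8m)=1/(4n)$, this gives the even case with coefficient $-1/4$.

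For the odd case $n=2m+1$ we avoid a fresh Stirling computation and use the adjacent identity already used in Proposition~\ref{prop:finite-refinement}:
\[
 K_n=\frac{2m+1}{m+1}\binom{2m}{m}=\frac{2n}{n+1}\binom{2m}{m}.
\]
Insert the even-case expansion with $m=(n-1)/2$:
\[
 \binom{2m}{m}=\sqrt{\frac{2}{\pi}}\frac{2^{n-1}}{\sqrt{n-1}}\Bigl(1-\frac{1}{4(n-1)}+O(n^{-2})\Bigr).
\]
Collecting terms gives
\[
 K_n=\sqrt{\frac{2}{\pi}}\frac{2^n}{\sqrt n}\cdot\frac{n}{n+1}\sqrt{\frac{n}{n-1}}\Bigl(1-\frac{1}{4n}+O(n^{-2})\Bigr).
\]
We then expand each factor to first order: $n/(n+1)=1-1/n+O(n^{-2})$, $\sqrt{n/(n-1)}=1+1/(2n)+O(n^{-2})$, and $1/(4(n-1))=1/(4n)+O(n^{-2})$. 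The $1/n$ coefficients add to $-1+\tfrac12-\tfrac14=-\tfrac34$, which is the odd case.

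The only delicate point is the bookkeeping of the $O(\cdot)$ terms. Products of factors of the form $1+c/n+O(n^{-2})$ are again of that form with the coefficients added. We also need $O(m^{-2})=O(n^{-2})$, which holds because $m\asymp n$. Beyond that, the main risk is an arithmetic slip in the odd case. As a check, the $n/(n+1)$ factor contributes the $-1/n$ that matches the $+1/n$ conversion term noted in Section~\ref{sec:asymptotics}.
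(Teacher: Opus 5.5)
Your proposal is correct and follows the paper's proof essentially step for step. You apply Stirling to $(2m)!$ and $m!$ to get the even case with correction $1-\frac{1}{8m}$, then reduce the odd case to it via $K_n=\frac{2n}{n+1}K_{n-1}$ and the first-order expansions of $\frac{n}{n+1}$, $\sqrt{n/(n-1)}$ and $1-\frac{1}{4(n-1)}$, whose coefficients sum to $-1+\frac12-\frac14=-\frac34$, exactly as in the paper.
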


\begin{proof}
Let first $n=2r$.  Applying Theorem~\ref{thm:Stirling} to $(2r)!$ and $r!$ gives
\[
 \binom{2r}{r}
 =\frac{4^r}{\sqrt{\pi r}}
 \frac{1+\frac{1}{24r}+O(r^{-2})}
      {1+\frac{1}{6r}+O(r^{-2})}.
\]
Since $(1+x)^{-1}=1-x+O(x^2)$ for $x=O(r^{-1})$,
\[
 \frac{1}{1+\frac{1}{6r}+O(r^{-2})}
 =1-\frac{1}{6r}+O(r^{-2}),
\]
and hence
\[
 \binom{2r}{r}
 =\frac{4^r}{\sqrt{\pi r}}
 \left(1-\frac{1}{8r}+O(r^{-2})\right).
\]
Using $n=2r$, $4^r=2^n$ and $\sqrt r=\sqrt{n/2}$ gives the even case of
\cref{eq:Kn-Stirling}.

Now let $n$ be odd.  The adjacent-binomial identity gives
\[
 K_n=\frac{2n}{n+1}K_{n-1}.
\]
Since $n-1$ is even, the case just proved gives
\[
 K_{n-1}
 =\sqrt{\frac2\pi}\frac{2^{n-1}}{\sqrt{n-1}}
 \left(1-\frac{1}{4(n-1)}+O(n^{-2})\right).
\]
Therefore
\[
 K_n
 =\sqrt{\frac2\pi}\frac{2^n}{\sqrt n}
 \left(\frac{n}{n+1}\sqrt{\frac{n}{n-1}}\right)
 \left(1-\frac{1}{4(n-1)}+O(n^{-2})\right).
\]
Using
\[
 \frac{n}{n+1}=1-\frac1n+O(n^{-2}),
 \qquad
 \sqrt{\frac{n}{n-1}}=1+\frac{1}{2n}+O(n^{-2}),
\]
the first parenthesis equals $1-1/(2n)+O(n^{-2})$, while the second equals
$1-1/(4n)+O(n^{-2})$.  Their product is
$1-3/(4n)+O(n^{-2})$, proving the odd case.
\end{proof}

Combining Lemma~\ref{lem:Kn-Stirling} with Theorem~\ref{thm:main} gives
Corollary~\ref{cor:Gaussian}.

\begin{corollary}\label{cor:Gaussian}
As $n\to\infty$,
\begin{equation}\label{eq:relative-Gaussian}
 f(n)\ge\sqrt{\frac2\pi}\frac{2^n}{\sqrt n}
 \begin{cases}
  1+\dfrac{5}{12n}+O(n^{-2}),&n\text{ even},\\[6pt]
  1+\dfrac{7}{12n}+O(n^{-2}),&n\text{ odd}.
 \end{cases}
\end{equation}
\end{corollary}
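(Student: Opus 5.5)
The plan is to multiply the two relative expansions already proved. By \cref{eq:main-finite} we have $f(n)\ge H_n = K_n\cdot (H_n/K_n)$. Theorem~\ref{thm:main} gives $H_n/K_n$ through \cref{eq:relative-K}, and Lemma~\ref{lem:Kn-Stirling} gives $K_n$ through \cref{eq:Kn-Stirling}. Since both are exact asymptotic equalities of the form $1+c/n+O(n^{-2})$, and $H_n>0$, the lower bound $f(n)\ge H_n$ becomes a lower bound of the stated shape once $H_n$ is expanded.

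For even $n$, write
\[
 H_n=\sqrt{\frac2\pi}\frac{2^n}{\sqrt n}\left(1-\frac{1}{4n}+O(n^{-2})\right)\left(1+\frac{2}{3n}+O(n^{-2})\right).
\]
Then use $(1+x/n+O(n^{-2}))(1+y/n+O(n^{-2}))=1+(x+y)/n+O(n^{-2})$. The coefficient is $-\tfrac14+\tfrac23=\tfrac{5}{12}$.

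For odd $n$ the same computation applies, with coefficient $-\tfrac34+\tfrac43=\tfrac{7}{12}$. The product of two error terms $O(n^{-1})\cdot O(n^{-1})$ and the cross terms $O(n^{-2})\cdot O(1)$ are all absorbed into $O(n^{-2})$.

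The only point needing care is interpretation rather than difficulty. The inequality $f(n)\ge(\cdots)(1+c/n+O(n^{-2}))$ is meant in the sense that there is a function $\varepsilon(n)=O(n^{-2})$ with $f(n)\ge\sqrt{2/\pi}\,2^n n^{-1/2}(1+c/n+\varepsilon(n))$. Taking $\varepsilon(n)$ to be the exact error term in the expansion of $H_n$ gives this directly. The parity split is inherited from both inputs, so there is no real obstacle.
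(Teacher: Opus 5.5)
Your proposal is correct and is essentially the paper's own argument: it multiplies the parity-split expansion of $K_n$ from Lemma~\ref{lem:Kn-Stirling} by the expansion of $H_n/K_n$ from \cref{eq:relative-K}, reads off the coefficients $-\tfrac14+\tfrac23=\tfrac5{12}$ and $-\tfrac34+\tfrac43=\tfrac7{12}$, and absorbs the cross terms into $O(n^{-2})$. Your remark on how to read an inequality that contains an $O(n^{-2})$ term is a harmless clarification that the paper leaves implicit.
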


\begin{proof}
For even $n$, multiply the even cases of \cref{eq:main-relative-f} and
\cref{eq:Kn-Stirling}.  The coefficient of $1/n$ is
\[
 -\frac14+\frac23=\frac5{12}.
\]
For odd $n$, the coefficient of $1/n$ is
\[
 -\frac34+\frac43=\frac7{12}.
\]
In both cases the product of the two $O(n^{-1})$ corrections contributes only
$O(n^{-2})$.  This proves \cref{eq:relative-Gaussian}.
\end{proof}

\section*{Declaration of Generative AI} During the preparation of this work, the author used ChatGPT~5.6 to improve the language, readability, and clarity of the exposition. After using this tool, the author carefully reviewed and edited the manuscript as needed. The author takes full responsibility for the content of the article.

{

}

\end{document}